\def\AA{\mathbb{A}}
\def\GG{\mathbb{G}}
\def\Ex{\mathbf{E}}
\def\TT{\mathbb{T}}
\def\RR{\mathbb{R}}
\def\II{\mathbb{I}}
\def\DD{\mathbb{D}}
\def\Liminf{\mathop{\underline{\lim}}\limits}
\def\Pb{\mathbf{P}}
\let\bar\overline
\let\tilde\widetilde
\def\limnto{\mathrel{\mathop{\longrightarrow\kern 0pt}\limits_{n\to\infty}}}
\let\bar\overline
\def\1{\mbox{1\hspace{-.25em}I}}
\let\tilde\widetilde
\newcommand{\theoremname}{Theorem}
\newcommand{\definitionname}{Definition}
\newcommand{\propositionname}{Proposition}
\newcommand{\lemmaname}{Lemma}
\newcommand{\corollaryname}{Corollary}
\newcommand{\propertyname}{Property}
\newcommand{\exercisename}{Exercise}
\newcommand{\remarkname}{Remark}
\newcommand{\recallname}{Recall}
\newcommand{\notationname}{Notation}
\newtheorem{theorem}{\theoremname}[section]
\newcommand{\Sectionmark}{}
\newcommand{\Section}[2]{    \renewcommand{\Sectionmark}{2}  \markboth{Section \thesection\quad---\quad\textsc{\Sectionmark}}}
\providecommand{\fg}{\ifdim\lastskip>\z@\unskip\fi~\frqq}}
\begin{document}

\title{\textbf{Method of Moments Estimators and Multu-step MLE for
    Poisson Processes}}
\date{}
\author{Ali S.\ Dabye\footnotemark[1] , Alix A.\ Gounoung\footnotemark[1] ,
Yury A.\ Kutoyants \footnotemark[2]}
\maketitle

\begin{abstract}
We introduce two types  of estimators of the finite-dimensional parameters in
the case of observations of inhomogeneous Poisson processes. These are the
estimators of the method of moments and multi-step MLE. It is shown
that the estimators of the method of moments are consistent and asymptotically
normal and the multi-step MLE are consistent and asymptotically
efficient. The construction of multi-step MLE-process is done in two
steps. First we construct  a consistent estimator by the observations on some
learning interval and then this estimator is used for construction of one-step
and two-step MLEs. The main advantage of the proposed approach is its
computational simplicity.
\end{abstract}


\selectlanguage{english}

\footnotetext[1]{%
Laboratoire d'\'Etudes et de Recherches en Statistique et D\'eveloppement
(LERSTAD), Universit\'e Gaston Berger, S\'en\'egal} \footnotetext[2]{%
Le Mans University, Le Mans, France, 
Tomsk
State University, Tomsk, Russia and National Research University ``MPEI'',
Moscow, Russia }
\bigskip{}

\textbf{Key words}: Parameter estimation, inhomogeneous Poisson processes,
method of moments estimators, consistency, asymptotic normality, asymptotic
efficiency, multi-step MLE.

\bigskip{}

\textsl{AMS subject classification}: 62F10, 62F12, 62M05, 62G20.

\section{Introduction}

This work is devoted to the problem of parameter estimation in the case of
continuous time observations of inhomogeneous Poisson processes. The Poisson
process is one of the main models in the description of the series of events
in real applied problems in optical telecommunications, biology, physics,
financial mathematics etc. (see, e.g., \cite{ALZ02}, \cite{BD69}, \cite{BG03},
\cite{RB00}, \cite{S16}).  Note that the intensity function entirely
identifies the process and therefore the statistical inference is concerned
this function only. We suppose that the intensity function of the observed
Poisson process is a known function which depends on some unknown
finite-dimensional parameter.  We consider the problem of this parameter
estimation in the asymptotics of large samples.  We have to note that the
estimation theory (parametric and non parametric) is well developed and there
exists a large number of publications devoted to this class of problems (see,
e.g., \cite{D77}, \cite{Kut84}, \cite{SM91}, \cite{Kut98} and the references
therein). The method of moments and one-step estimation procedure in the case
of i.i.d. observations are well
known too. Our goal  is to apply the method of moments to the estimation of
the parameters of inhomogeneous Poisson processes and to present a version of
one-step and multi-step procedures with the help of some preliminary
estimators obtain on the small learning interval.

We are given  $n$ independent observations
$X^{\left(n\right)}=\left(X_1,\ldots,X_n\right) $ of the Poisson processes
$X_j=\left(X_j\left(t\right),t\in \TT\right)$ with the same intensity function
$\lambda \left(\vartheta ,t\right),t\in \TT$.  Here $\TT $ is an interval of
observations. It can be finite, say, $\TT=\left[0,T \right]$ or infinite
$\TT=[0,\infty )$, $\TT=(-\infty ,\infty )$. The unknown parameter $\vartheta
  \in \Theta $, where the set $\Theta $ is an open, convex and bounded subset of $ {\cal
    R}^d$.    Recall that the increments of the
  Poisson process ($X_j$ is a  counting process) on disjoint intervals are
  independent and for any   $k=0,1,2,\ldots$  and $t_1<t_2$
$$
\Pb_\vartheta \Bigl(X_j\left(t_2\right)-X_j\left(t_1\right)=k\Bigr)=\frac{\left[\int_{t_1}^{t_2}\lambda
  \left(\vartheta ,s\right){\rm d}s\right]^k
}{k!}\exp\left\{\int_{t_1}^{t_2}\lambda
  \left(\vartheta ,s\right){\rm d}s\right\}.
$$
Recall that
\begin{align*}
\Ex_\vartheta X_j\left(t\right)=\Lambda \left(\vartheta
,t\right)=\int_{}^{t}\lambda \left(\vartheta ,s\right){\rm d}s,\qquad t\in
\TT.
\end{align*}

  We have to estimate the true value of $\vartheta=\vartheta_0 $ by the
  observations $X^n$ and to describe the asymptotic ($n\rightarrow \infty $)
  properties of estimators.
 It is known that under regularity conditions  the method of
 moments estimators in the case of i.i.d. observations of the random variables are
 consistent and asymptotically normal (see, e.g. \cite{Bor98}, \cite{L99}).
Our goal is to introduce the estimators of the method of moments in the case
of observations of inhomogeneous Poisson processes. This method of estimation
was introduced by Karl Pearson in 1894 in the case of observations of the
i.i.d. random variables. Then it was extended to many other models of
observations and widely used in applied problems. It seems that till now this
method was not yet used for the estimation of the parameters of inhomogeneous
Poisson processes.

 The maximum likelihood estimator (MLE) $\hat\vartheta _n$ (under regularity
 conditions) is consistent, asymptotically normal
\begin{align*}
\sqrt{n}\left(\hat\vartheta _n-\vartheta _0\right)\Longrightarrow {\cal
  N}\left(0, \II\left(\vartheta _0\right)^{-1}\right)
\end{align*}
 and asymptotically efficient (see, e.g., \cite{Kut84}). Here
 $\II\left(\vartheta _0\right)$ is the Fisher information matrix
\begin{align*}
\II\left(\vartheta _0\right)=\int_{\TT}^{}\dot \lambda \left(\vartheta
_0,t\right)\dot \lambda \left(\vartheta _0,t\right)^\tau  \lambda \left(\vartheta
_0,t\right)^{-1} {\rm d}t .
\end{align*}
Here and in the sequel dot means derivation w.r.t. $\vartheta $ and $A^\tau $
means the transpose of the vector (or matrix) $A$.

Recall that in the regular case the following lower bound (called Hajek-Le Cam)
holds: for any estimator $(\bar\vartheta _n $ and any $\vartheta _0\in\Theta $
we have
\begin{align*}
\lim_{\nu \rightarrow 0 }\Liminf_{n\rightarrow \infty }\sup_{\left|\vartheta
  -\vartheta _0\right|<\nu }n\Ex_\vartheta \left| \II\left(\vartheta
_0\right)^{1/2}\left(\bar\vartheta
_n-\vartheta\right) \right|^2 \geq d.
\end{align*}
Thois bound allows us to define  the asymptotically efficient estimator
$\check\vartheta _n$ as estimator satisfying the equality
\begin{align*}
\lim_{\nu \rightarrow 0 }\Liminf_{n\rightarrow \infty }\sup_{\left|\vartheta
  -\vartheta _0\right|<\nu }n\Ex_\vartheta \left| \II\left(\vartheta
_0\right)^{1/2}\left(\check\vartheta
_n-\vartheta\right) \right|^2=  d
\end{align*}
for all $\vartheta _0\in\Theta $.

If we verify that the moments of the MLE converge uniformly on $\vartheta $
then this proves the asymptotic efficiency of the MLE (see \cite{IH81}, \cite{Kut84}).

In the present work we introduce two classes of estimators. The first one is
the class of the method of moments estimators (MME) and the second class is
the multi-step MLEs.

We show that the MMEs for many models of inhomogeneous Poisson processes are
easy to calculate, but these estimators as usual are not asymptotically
efficient. The MLEs are asymptotically efficient, but their calculation is
often a difficult problem.  The main result of this work is the introduction
of the multi-step MLEs which are easy to calculate and which are
asymptotically efficient. These multi-step MLEs are calculated in several
steps. For example, one-step MLE is calculated as follows. First we fix the
{\it learning observations} $X^{N}=\left(X_1,\ldots,X_N\right)$, where
$N=\left[n^\delta \right]$ with $\delta \in \left(\frac{1}{2},1\right)$. Here
$\left[a\right]$ is the entier part of $a$. By the observations $X^{N}$ we
construct the MME $\vartheta _N^*$ and then with the help of it we introduce
the one-step MLE by the equality
\begin{align*}
\vartheta _n^\star=\vartheta _N^*+\frac{1}{\sqrt{n}}\II\left(\vartheta
_N^*\right)^{-1}\sum_{j=N+1}^{n}\int_{\TT}^{}\dot \lambda \left(\vartheta
_N^*,t\right)\lambda \left(\vartheta _N^*,t\right)^{-1}\left[{\rm
    d}X_j\left(t\right)-\lambda \left(\vartheta _N^*,t\right){\rm d}t\right] .
\end{align*}
It is shown that this estimator is asymptotically normal
\begin{align*}
\sqrt{n}\left(\vartheta _n^\star-\vartheta _0\right)\Longrightarrow {\cal
  N}\left(0, \II\left(\vartheta _0\right)^{-1}\right)
\end{align*}
and is asymptotically efficient.

 Recall that the MLE can be explicitly written for the very narrow class of
 intensities. Therefore it is important to have other estimators, which are
 consistent and asymptotically normal and the same time can be easily
 calculated.

\section{ Method of Moments for Poisson processes}

Let us construct  the method of moments estimator in the case of
observations of inhomogeneous Poisson process.  We have $n$
independent observations $X^{n}=(X_{1},X_{2},...,X_{n}) $ of the Poisson
processes  $X_{j}=(X_{j}(t), t\in \TT ) $ with the intensity function
$(\lambda \left( \theta ,t\right) , t\in \TT  )$.

The  unknown parameter $\theta \in \Theta \subset \mathbb{R}%
^{d}$. Here $\Theta $ is an open, convex, bounded set.

In the construction of the   method of moments estimator (MME) we follow
the same way as in the construction of MME in the case of i.i.d. random
variables.
Introduce the vector-function ${\bf g}\left( s\right) =(g_{1}\left( s\right)
,...,g_{d}\left( s\right) ), t\in \TT $ and the vector of integrals
${\bf I}^{\left(d\right)}=\left(I_{1}
,\ldots, I_{d}\right)$, where
$$
I_{l} =\int_{\TT}^{ }g_{l}\left( s\right)
{\rm d}X_{1}\left( s\right),\qquad l=1,\ldots,d.
$$
We have
$$
\mathbb{E}_{\theta }{\bf I}^{\left(d\right)} =\int_{\TT}^{ }{\bf g}\left(
s\right) \lambda \left( \theta ,s\right) {\rm d}s.
$$
Let us denote ${\bf M}\left( \vartheta  \right) =\mathbb{E}_{\theta }{\bf
  I}^{\left(d\right)}$ and suppose that the function ${\bf g}\left( \cdot
\right) $ is such that the equation
$
{\bf M}\left( \vartheta  \right) ={\bf a}
$
for all $\vartheta \in \Theta $ has a unique solution
$
\vartheta={\bf M}^{-1}\left( {\bf a}  \right) ={\bf H}\left( {\bf a}  \right).
$
Here ${\bf H}\left( {\bf a}  \right) $ is the inverse function for ${\bf
  M}\left( \cdot  \right) $.

The   method of moments  estimator  $\vartheta _n^*$ is defined by the
equation
$$
\vartheta _n^*={\bf H}\left( {\bf a}_n  \right),
$$
where
$$
{\bf a}_n=\frac{1}{n}\sum_{j=1}^{n}\int_{\TT}^{ } {\bf g}\left( s\right){\rm d}X_j\left(s\right)
$$

Introduce the {\it Regularity conditions  ${\cal R}_0$ :}
\begin{itemize}
\item {\it For any $\nu >0$ and any} $\vartheta_0 \in\Theta $
$$
\inf_{\left|\vartheta -\vartheta _0\right|>\nu }\left|{\bf M}\left( \vartheta
\right)-{\bf M}\left( \vartheta_0  \right) \right| >0.
$$
\item {\it The vector-function ${\bf H}\left( \cdot  \right) $ is continuously
  differentiable. }
\end{itemize}
Introduce the matrix
$$
\DD\left(\vartheta\right)=\frac{\partial {\bf H}\left( \vartheta
  \right)}{\partial \vartheta }\GG\left(\vartheta \right) \frac{\partial {\bf H}\left( \vartheta
  \right)}{\partial \vartheta }^T.
$$
Here the matrices
\begin{align*}
\left(\frac{\partial {\bf H}\left( \vartheta
  \right)}{\partial \vartheta }\right)_{lk}=\frac{\partial { H}_l\left( \vartheta
  \right)}{\partial \vartheta_k },\qquad \GG\left(\vartheta
\right)_{l,k}=\int_{\TT}^{} g_l\left(s\right)g_k\left(s\right)\lambda
\left(\vartheta ,s\right){\rm d}s.
\end{align*}

\begin{theorem}
\label{T1}
Suppose that the vector-function ${\bf g}\left( \cdot \right) $ is such, that the
regularity conditions ${\cal R}_0$ are fulfilled. Then the MME $\vartheta _n^*$
is consistent and asymptotically normal
\begin{equation}
\label{01}
\sqrt{n}\left(\vartheta _n^*-\vartheta _0 \right)\Longrightarrow {\cal
  N}\left(0, \DD\left(\vartheta _0\right)\right).
\end{equation}

\end{theorem}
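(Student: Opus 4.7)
The plan is to follow the classical recipe for method-of-moments asymptotics: a law of large numbers for $\mathbf{a}_n$, inversion via $\mathbf{H}$ for consistency, a central limit theorem for $\sqrt{n}(\mathbf{a}_n-\mathbf{M}(\vartheta_0))$, and the delta method for asymptotic normality of $\vartheta_n^*=\mathbf{H}(\mathbf{a}_n)$.

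First I would write $\mathbf{a}_n=\frac{1}{n}\sum_{j=1}^n \mathbf{I}^{(d)}_j$ with $\mathbf{I}^{(d)}_j=\int_\TT \mathbf{g}(s)\,{\rm d}X_j(s)$. The variables $\mathbf{I}^{(d)}_j$ are i.i.d.\ under $\Pb_{\vartheta_0}$ with mean $\mathbf{M}(\vartheta_0)$; componentwise, for a Poisson process with intensity $\lambda(\vartheta_0,\cdot)$ the standard stochastic-integral formulas give $\Ex_{\vartheta_0}\int g_l\,{\rm d}X_1 = \int g_l\lambda\,{\rm d}s$ and $\mathrm{Cov}_{\vartheta_0}\!\left(\int g_l\,{\rm d}X_1,\int g_k\,{\rm d}X_1\right)=\int g_lg_k\lambda\,{\rm d}s=\GG(\vartheta_0)_{l,k}$, assuming the relevant integrals are finite (a tacit integrability hypothesis on $\mathbf{g}$ with respect to $\lambda(\vartheta,\cdot)$). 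The strong law of large numbers then yields $\mathbf{a}_n\to \mathbf{M}(\vartheta_0)$ a.s., and the multivariate CLT yields
\begin{equation*}
\sqrt{n}\bigl(\mathbf{a}_n-\mathbf{M}(\vartheta_0)\bigr)\Longrightarrow \mathcal{N}\bigl(0,\GG(\vartheta_0)\bigr).
\end{equation*}

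For consistency, the identifiability part of $\mathcal{R}_0$ says $\mathbf{M}$ separates points of $\Theta$, and together with continuous differentiability of $\mathbf{H}$ (hence continuity) this gives, by the continuous mapping theorem, $\vartheta_n^*=\mathbf{H}(\mathbf{a}_n)\to \mathbf{H}(\mathbf{M}(\vartheta_0))=\vartheta_0$ in probability. For the asymptotic distribution I would invoke the delta method for the $C^1$ map $\mathbf{H}$ at the point $\mathbf{M}(\vartheta_0)$: writing $J(\vartheta_0)$ for the Jacobian of $\mathbf{H}$ at $\mathbf{M}(\vartheta_0)$,
\begin{equation*}
\sqrt{n}\bigl(\vartheta_n^*-\vartheta_0\bigr)=J(\vartheta_0)\sqrt{n}\bigl(\mathbf{a}_n-\mathbf{M}(\vartheta_0)\bigr)+o_{\Pb}(1)\Longrightarrow \mathcal{N}\bigl(0,J(\vartheta_0)\GG(\vartheta_0)J(\vartheta_0)^T\bigr),
\end{equation*}
which, matching the notational convention of the paper, is exactly $\mathcal{N}(0,\DD(\vartheta_0))$ and gives \eqref{01}.

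There is no serious obstacle here; the only delicate points are (i) making sure the LLN/CLT can be applied, which requires $\int_\TT g_l^2\lambda(\vartheta_0,s)\,{\rm d}s<\infty$ for each $l$ (needed in any case to make $\DD(\vartheta_0)$ a well-defined finite matrix), and (ii) justifying the linearization of $\mathbf{H}$ around $\mathbf{M}(\vartheta_0)$, which is immediate from the continuous differentiability in $\mathcal{R}_0$ together with the consistency established in the previous step. If $\TT$ is infinite, one must in addition assume enough integrability of $\mathbf{g}$ against $\lambda(\vartheta,\cdot)$ uniformly near $\vartheta_0$ for the stochastic integrals $\int_\TT \mathbf{g}(s)\,{\rm d}X_j(s)$ to be well defined with finite second moments, but once this is granted the argument is purely routine.
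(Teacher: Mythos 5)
Your proposal is correct and follows essentially the same route as the paper: law of large numbers plus the continuous mapping theorem for consistency, then the CLT for $\sqrt{n}\bigl(\mathbf{a}_n-\mathbf{M}(\vartheta_0)\bigr)$ with covariance $\GG(\vartheta_0)$ combined with the delta method (the paper writes this as the linearization $\sqrt{n}\left(\vartheta_n^*-\vartheta_0\right)=\frac{\partial \mathbf{H}}{\partial\vartheta}\,\eta_n\left(1+o(1)\right)$) to obtain \eqref{01}. Your explicit remark on the integrability of $\mathbf{g}$ against $\lambda(\vartheta_0,\cdot)$ is a tacit assumption in the paper, not a divergence in method.
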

{\bf Proof.}  By the Law of Large Numbers
\begin{equation*}
a_{l,n}=\frac{1}{n}\sum_{j=1}^{n}\int_{\TT}^{ }g_l\left(s\right){\rm
  d}X_j\left(s\right)\longrightarrow \int_{\TT}^{ }g_{l}\left( s\right)
\lambda \left( \vartheta_0 ,s\right) {\rm d}s, \quad l=1,...,d
\end{equation*}%
and hence by Continuous Mapping Theorem
${\bf H}\left( {\bf a}_{n}\right) \longrightarrow {\bf H}\left( {\bf a}_0\right)
=\vartheta_0  .$
Here we put ${\bf a}_0={\bf M}\left(\vartheta _0\right) $.
To show asymptotic normality we write
\begin{align*}
\sqrt{n}\left( \vartheta _n^*-\vartheta _0\right) =\sqrt{n}\left({\bf H}\left(
     {\bf a}_{n}\right) -{\bf H}\left( {\bf a}_0\right)\right)=\sqrt{n}\left({\bf H}\left(
     {\bf a}_{ 0}+b_n\eta _n\right) -{\bf H}\left( {\bf a}_0\right)\right)
\end{align*}
where $b_n=n^{-1/2}$ and the vector
\begin{align*}
\eta _n=\frac{1}{\sqrt{n}}\sum_{j=1}^{n}\int_{\TT}^{ }{\bf g}\left(s\right)\left[{\rm d}X_j\left(s\right)-\lambda \left(\vartheta _0,s\right){\rm d}s\right].
\end{align*}
By the Central Limit Theorem
\begin{align*}
\eta _n\Longrightarrow {\cal N}\left(0,\GG\left(\vartheta \right)\right).
\end{align*}
The asymptotic normality  \eqref{01} now follows from this convergence and the
presentation
\begin{align*}
\sqrt{n}\left( \vartheta _n^*-\vartheta _0\right) =\frac{\partial {\bf H}\left( \vartheta
  \right)}{\partial \vartheta }\eta _n\left(1+o\left(1\right)\right).
\end{align*}
Recall that the vector-function $H\left(a\right) $ is continuously
differentiable.

\bigskip

{\bf Example 1}.
Suppose that the  intensity function is
\begin{equation*}
\lambda \left( \theta ,t\right) =\sum_{l=1}^{d}\theta _{l}h_{l}\left(
t\right) +\lambda _{0},  \qquad t\in\TT.
\end{equation*}
Introduce the vector-function ${\bf g}\left( \cdot \right) $ and the corresponding
integrals  ${\bf I}^{\left(d\right)} $. The vector
${\bf M}\left(\vartheta \right)=\AA \vartheta +\lambda _0{\bf G}$,
where
\begin{eqnarray*}
\AA_{kl} =\int_{\TT}^{ }g_k\left(t\right) h_l\left(t\right){\rm d}t,\qquad
G_{k}=\int_{\TT}^{ }g_k\left(t\right) {\rm d}t
\end{eqnarray*}
in obvious notations. Hence we can write
$$
\vartheta =\AA^{-1}\left[{\bf M}\left(\vartheta \right)-\lambda _0{\bf G}
  \right]=\AA^{-1}\left[{\bf a}-\lambda _0{\bf G}
  \right] ={\bf H}\left( {\bf a}\right).
$$

Therefore
the MME $\vartheta_{n}^{* } $ is given by the equality
\begin{equation}
\label{mme1}
\vartheta _{n}^{* }=\AA^{-1}\left[ {\bf a}_{n}-\lambda _{0}{\bf
    G}\right]=\AA^{-1}\frac{1}{n }\sum_{j=1}^{n}\int_{\TT}^{} {\bf
  g}\left(t\right)\left[{\rm d}X_j\left(t\right)-\lambda _0{\rm d}t\right].
\end{equation}
This estimator by the Theorem \ref{T1} is consistent and asymptotically
normal. To simplify its calculation we can take such functions $ {\bf
  g}\left(\cdot \right)$  that the matrix $ \AA$ became diagonal.

{\bf Example 2}.
Suppose that the inhomogeneous Poisson processes $X^{\left(n\right)}$  are
observed on the time interval $\TT=[0,\infty )$ and have the intensity function
\begin{equation*}
\lambda \left( \vartheta  ,t\right) =\frac{t^{\beta -1}\alpha ^{\beta }}{\Gamma
\left( \beta \right) }\exp \left( -\alpha t\right) , t\geq 0,
\end{equation*}
i.e., we have  Poisson processes with the Gamma intensity function. The unknown
parameter is $\vartheta  =\left( \alpha ,\beta\right) .$
We know, that
\begin{align*}
M_{1}\left( \vartheta  \right)  =\int_{0}^{\infty }t\mathbf{\ }\lambda \left(
\vartheta  ,t\right) {\rm d}t=\frac{\beta }{\alpha }, \qquad
M_{2}\left( \vartheta  \right)  =\int_{0}^{\infty }t^{2}\lambda
\left( \vartheta  ,t\right) {\rm d}t=\frac{\beta\left( \beta +1\right)  }{\alpha ^{2}}.
\end{align*}
Hence, if we take $g\left( t\right) =\left(g_{1}\left( t\right) ,g_{2}\left(
t\right) \right)=\left( t,t^{2}\right) ,$ then the system ${\bf M}\left(\vartheta \right)= {\bf a}$
has the unique solution
\begin{equation*}
          \alpha  =\frac{a_{1}}{a_{2}-a_{1}^{2}},\qquad
       \beta  =\frac{a_{1}^{2}}{a_{2}-a_{1}^{2}}.
      \end{equation*}
Therefore the MME $\vartheta _n^*=\left(\alpha _{n}^{\ast },\beta _{n}^{\ast } \right)$ is
\begin{align}
\label{mme2}
\alpha _{n}^{\ast } &=\frac{\frac{1}{n}\sum_{j=1}^{n}\!\int_{0}^{\infty
}t{\rm d}X_{j}\left( t\right)}{  \left(
\frac{1}{n}\sum_{j=1}^{n}\!\int_{0}^{\infty  }t^{2}{\rm d}X_{j}\left( t\right)
\!-\!\left( \frac{1}{n}\sum_{j=1}^{n}\!\int_{0}^{\infty  }t{\rm d}X_{j}\left( t\right)
\right)^{2}
\right)}, \\
\label{mme3}
\beta _{n}^{\ast } &=\frac{\left( \frac{1}{n}\sum_{j=1}^{n}\!\int_{0}^{\infty
}t{\rm d}X_{j}\left( t\right) \right)^{2}}{\left(
\frac{1}{n}\sum_{j=1}^{n}\!\int_{0}^{\infty  }t^{2}{\rm d}X_{j}\left( t\right)
\!-\!\left( \frac{1}{n}\sum_{j=1}^{n}\!\int_{0}^{\infty  }t{\rm d}X_{j}\left( t\right)
\right)^{2}
\right)}.
\end{align}
 This estimator is consistent and asymptotically normal.

The similar example can be considered and in the case of observations on
$\TT=\left(-\infty ,+\infty \right)$ and the Gaussian intensity function   with $\vartheta =\left(\alpha ,\sigma ^2\right)$:
\begin{equation*}
\lambda \left( \vartheta  ,t\right) =\frac{1}{\sqrt{2\pi \sigma ^{2}}}\exp
\left\{ -\frac{\left( t-\alpha \right) ^{2}}{2\sigma ^{2}}\right\} , \ t\in
\mathbb{R}.
\end{equation*}%

\section{ One-Step MLE}

 The one-step MLE was introduced by Fisher (1925). This one-step
 procedure allows  to improved a
 consistent estimator $\bar\vartheta _n$ up to asymptotically efficient
 (one-step MLE)  $\vartheta _n^\star$.
We consider the similar construction  in the case of inhomogeneous Poisson
processes. Suppose that the observations $X^{\left(n\right)}=\left(
X_{1},...,X_{n}\right) $ are Poisson processes with the intensity function
$ \lambda (\vartheta ,t),t\in \TT$.

{\it Condition ${\cal P}_0$}.  {\it We have a (preliminary) estimator $\bar{\vartheta }%
_{n} $, which is consistent and such that $\sqrt{n}
\left( \bar{\vartheta  } _{n}-\vartheta _{0}\right) $ is bounded in
probability. }

 Introduce the learning
observations $X^{\left(N\right)}=\left(X_{1},...,X_{N}\right)$ and   the one-step MLE
\begin{equation*}
\vartheta _{n}^{\star }= \bar{\vartheta }_{N}+\frac{{\II\left( \bar{
\vartheta }_{N}\right) ^{-1}}}{{n}}\sum_{j=N+1}^{n}\int_{\TT}^{ }\frac{%
\dot{\lambda }\left( \bar{\vartheta }_{N},t\right) }{\lambda
\left( \bar{\vartheta }_{N},t\right) }\left[ {\rm d}X_{j}\left( t\right)
-\lambda \left( \bar{\vartheta }_{N},t\right) {\rm d}t\right] .
\end{equation*}
Here $\bar{\vartheta }_{N} $ is the preliminary estimator constructed by
the first $N$ observations.

{\it Regularity conditions ${\cal L}_0$}:
\begin{itemize}
\item {\it The function $l\left( \vartheta ,t\right) =\ln \lambda \left(
\vartheta ,t\right) $ has three continuous bounded derivatives.
w.r.t. $\vartheta $
\item  The Fisher
information matrix $\II\left(\vartheta \right)$
is uniformly on $\vartheta \in\Theta $ non degenerated:
\begin{equation*}
\inf_{\vartheta \in\Theta }  \inf_{\left|\mu\right| =1} \mu ^\tau\II\left(\vartheta \right)\mu  >0.
\end{equation*}
}
\end{itemize}
Here $\mu \in \RR^d$.

\begin{theorem}
\label{T2}
Suppose that the conditions ${\cal P}_0$ and ${\cal L}_0$ are fulfilled.
Then the one-step MLE $\vartheta _{n}^{\star }$ is asymptotically
normal%
\begin{equation*}
\sqrt{n}\left( \vartheta _{n}^{\star }-\vartheta _{0}\right) \Longrightarrow
\mathcal{N}\left( 0,\II\left(\vartheta_0 \right) ^{-1}\right) .
\end{equation*}
\end{theorem}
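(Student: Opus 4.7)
The plan is to substitute the definition of $\vartheta _n^\star $ into $\sqrt{n}\left(\vartheta _n^\star -\vartheta _0\right)$ and Taylor-expand the score around $\vartheta _0$, exploiting that $\bar\vartheta _N$ depends only on $X_1,\ldots ,X_N$ and is therefore independent of the fresh observations $X_{N+1},\ldots ,X_n$ that enter the correction. Writing
\begin{equation*}
\sqrt{n}\left(\vartheta _n^\star -\vartheta _0\right)
= \sqrt{n}\left(\bar \vartheta _N-\vartheta _0\right)
+ \II\left(\bar \vartheta _N\right)^{-1}U_n,\qquad U_n=\frac{1}{\sqrt{n}}\sum_{j=N+1}^{n}\int_{\TT}^{}\frac{\dot\lambda \left(\bar\vartheta _N,t\right)}{\lambda \left(\bar\vartheta _N,t\right)}\left[{\rm d}X_j\left(t\right)-\lambda \left(\bar\vartheta _N,t\right){\rm d}t\right],
\end{equation*}
the first step would decompose $U_n=U_n^{(1)}+U_n^{(2)}$ by splitting ${\rm d}X_j-\lambda (\bar\vartheta _N,t){\rm d}t$ into the $\vartheta _0$-centered martingale increment ${\rm d}X_j-\lambda (\vartheta _0,t){\rm d}t$ and the deterministic bias $-\bigl[\lambda (\bar\vartheta _N,t)-\lambda (\vartheta _0,t)\bigr]{\rm d}t$.

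For $U_n^{(2)}$, a first-order Taylor expansion of $\lambda (\cdot ,t)$ together with the continuity of $\dot\lambda /\lambda $ yields
\begin{equation*}
U_n^{(2)} = -\frac{n-N}{\sqrt{n}}\bigl(\II(\vartheta _0)+o_P(1)\bigr)\left(\bar\vartheta _N-\vartheta _0\right)+O_P\!\bigl(\sqrt{n}\,|\bar\vartheta _N-\vartheta _0|^2\bigr).
\end{equation*}
Condition ${\cal P}_0$ applied at sample size $N$ gives $|\bar\vartheta _N-\vartheta _0|^2=O_P(1/N)=O_P(n^{-\delta })$, so the quadratic remainder is $O_P(n^{1/2-\delta })=o_P(1)$ precisely because $\delta >\tfrac{1}{2}$, and $(n-N)/\sqrt{n}=\sqrt{n}(1+o(1))$, so $U_n^{(2)}=-\sqrt{n}\,\II(\vartheta _0)(\bar\vartheta _N-\vartheta _0)+o_P(1)$. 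For $U_n^{(1)}$ I would condition on $\bar\vartheta _N$ (which makes the $j>N$ summands i.i.d.\ centered integrals under $\Pb_{\vartheta _0}$) and compare it to the genuine score $S_n=\frac{1}{\sqrt{n}}\sum_{j=N+1}^{n}\int_{\TT}^{}\frac{\dot\lambda (\vartheta _0,t)}{\lambda (\vartheta _0,t)}\left[{\rm d}X_j(t)-\lambda (\vartheta _0,t){\rm d}t\right]$; by ${\cal L}_0$ the conditional variance of $U_n^{(1)}-S_n$ is of order $|\bar\vartheta _N-\vartheta _0|^2=o_P(1)$, while the unconditional CLT applied to $S_n$ (using $(n-N)/n\to 1$) gives $S_n\Longrightarrow {\cal N}(0,\II(\vartheta _0))$.

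Collecting the pieces,
\begin{equation*}
\sqrt{n}\left(\vartheta _n^\star -\vartheta _0\right)
= \bigl[I-\II(\bar\vartheta _N)^{-1}\II(\vartheta _0)\bigr]\,\sqrt{n}\left(\bar\vartheta _N-\vartheta _0\right)
+ \II(\bar\vartheta _N)^{-1}S_n+o_P(1).
\end{equation*}
The bracketed leading term is the delicate one: the smoothness assumed in ${\cal L}_0$ makes $\II(\cdot )$ Lipschitz near $\vartheta _0$, so $\II(\bar\vartheta _N)^{-1}\II(\vartheta _0)-I=O_P(N^{-1/2})=O_P(n^{-\delta /2})$, while $\sqrt{n}(\bar\vartheta _N-\vartheta _0)=O_P(\sqrt{n/N})=O_P(n^{(1-\delta )/2})$; their product is $O_P(n^{(1-2\delta )/2})=o_P(1)$, which is the second place where $\delta >\tfrac{1}{2}$ is exploited. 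Slutsky's theorem combined with $\II(\bar\vartheta _N)^{-1}\to \II(\vartheta _0)^{-1}$ in probability then delivers $\sqrt{n}(\vartheta _n^\star -\vartheta _0)\Longrightarrow {\cal N}(0,\II(\vartheta _0)^{-1})$.

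The main obstacle is exactly this bookkeeping of mixed rates: every appearance of $\bar\vartheta _N$ contributes a factor of $N^{-1/2}\!=n^{-\delta /2}$, while every $\sqrt{n}$ produced by the CLT scaling must be dominated by a matching factor of $|\bar\vartheta _N-\vartheta _0|$. The window $\delta \in (\tfrac{1}{2},1)$ is precisely the right one: $\delta >\tfrac{1}{2}$ kills both the quadratic Taylor remainder and the $\II$-difference cross-term, whereas $\delta <1$ guarantees $(n-N)/n\to 1$ so that $S_n$ retains the full Fisher variance. Independence of $\bar\vartheta _N$ from $X_{N+1},\ldots ,X_n$ is used repeatedly to condition on the preliminary estimator when computing second moments of the score-like quantities, which is what prevents any circularity in the argument.
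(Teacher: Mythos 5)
Your proof is correct and follows essentially the same route as the paper: the same three-term decomposition (preliminary-estimator error, $\vartheta_0$-centered score, deterministic bias), the same Taylor expansion producing the cancellation of $\sqrt{n}\left(\bar\vartheta_N-\vartheta_0\right)$ against the bias term, and the same use of $\delta>\tfrac12$ to kill the quadratic remainder. The only (harmless) organizational difference is that the paper obtains exact cancellation via the identity $\II\left(\bar\vartheta_N\right)=\int_{\TT}\dot\lambda\left(\bar\vartheta_N,t\right)^\tau\dot\ell\left(\bar\vartheta_N,t\right){\rm d}t$ and pushes everything into the Taylor remainder, whereas you bound the residual bracket $I-\II\left(\bar\vartheta_N\right)^{-1}\II\left(\vartheta_0\right)$ by a Lipschitz rate; your explicit conditional-variance argument for replacing $\bar\vartheta_N$ by $\vartheta_0$ in the score is if anything more careful than the paper's one-line assertion.
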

\begin{proof}
We have the equality%
\begin{align*}
\sqrt{n}\left( \vartheta _{n}^{\star }-\vartheta _{0}\right)  &=\sqrt{n}\left(
\bar{\vartheta }_{N}-\vartheta _{0}\right) + \\
&+{\II\left( \bar\vartheta_{N}\right)^{-1} }\frac{1}{\sqrt{n}}
\sum_{j=N+1}^{n}\int_{\TT}^{ }\dot{\ell}\left( \bar{\vartheta }
_{N},t\right) \left[ {\rm d}X_{j}\left( t\right) -\lambda \left( \vartheta
_{0},t\right) {\rm d}t\right] + \\
&+\II\left( \bar\vartheta_{N}\right)^{-1}    \frac{n-N}{\sqrt{n}}
\int_{\TT}^{ }\dot{\ell}\left( \bar{\vartheta }_{N},t\right)
\left[ \lambda \left( \vartheta _{0},t\right) -\lambda \left( \bar{\vartheta }
_{N},t\right) \right] {\rm d}t.
\end{align*}

As $\bar{\vartheta }_{N}\longrightarrow \vartheta _{0} $ we
 can write%
\begin{align*}
&\II\left( \bar\vartheta_{N}\right)^{-1}\frac{1}{\sqrt{n}}
\sum_{j=N+1}^{n}\int_{\TT}^{ }\dot{\ell}\left( \bar{\vartheta }
_{N},t\right) \left[ {\rm d}X_{j}\left( t\right) -\lambda \left( \vartheta
_{0},t\right) {\rm d}t\right]  \\
&\qquad =\II\left( \vartheta_{0}\right)^{-1}\frac{1}{\sqrt{n}}
\sum_{j=N+1}^{n}\int_{\TT}^{ }\dot{\ell}\left( \vartheta _{0},t\right)
\left[ {\rm d}X_{j}\left( t\right) -\lambda \left( \vartheta _{0},t\right) {\rm d}t\right]
+o\left( 1\right) .
\end{align*}

By the Central Limit Theorem
\begin{equation*}
\II\left( \vartheta_{0}\right)^{-1}\frac{1}{\sqrt{n}}\sum_{j=N+1}^{n}%
\int_{\TT}^{ }\dot{\ell}\left( \vartheta _{0},t\right) \left[
{\rm d}X_{j}\left( t\right) -\lambda \left( \vartheta _{0},t\right) {\rm d}t\right]
\Longrightarrow \mathcal{N}\left( 0,\II\left( \vartheta_{0}\right)^{-1}\right) .
\end{equation*}

Let us consider the remainder
\begin{align*}
R_{n} &=\sqrt{n}\left( \bar{\vartheta }_{N}-\vartheta _{0}\right)\\
 &\quad
 +\II\left( \bar\vartheta_{N}\right)^{-1}
\frac{n-N}{\sqrt{n}}\int_{\TT}^{} \dot{\ell}\left( \bar{\vartheta
}_{N},t\right) \left[ \lambda \left( \vartheta _{0},t\right) -\lambda \left(
  \bar{\vartheta }_{N},t\right) \right] {\rm d}t \\
&=\sqrt{n}\left(
\bar{\vartheta }_{N}-\vartheta _{0}\right)\II\left(
\bar\vartheta_{N}\right)^{-1} \left[ \II\left( \bar\vartheta_{N}\right)
  -\int_{\TT}^{ }\dot{\lambda }( \bar{\vartheta }_{N},t)^\tau\dot{\ell}\left( \bar{\vartheta }_{N},t\right)
  {\rm d}t\right]\\
&\quad  + \sqrt{n}\left(
\bar{\vartheta }_{N}-\vartheta _{0}\right) O\left(\frac{N}{n}\right)+ O\left( \sqrt{n} \left( \bar{\vartheta
}_{N}-\vartheta _{0}\right)^2\right)=o\left(1\right),
\end{align*}
where we used the equality
\begin{align*}
\II\left( \bar\vartheta_{N}\right)
  =\int_{\TT}^{ }\dot{\lambda }( \bar{\vartheta }_{N},t)^\tau\dot{\ell}\left( \bar{\vartheta }_{N},t\right)
  {\rm d}t
\end{align*}
and the Taylor expansion at the point $\bar{\vartheta }_{N} $:
\begin{align*}
\lambda \left( \vartheta _{0},t\right) -\lambda \left( \bar{\vartheta }%
_{N},t\right) &=-
\int_{0}^{1}\dot{\lambda }\left( {\bar\vartheta_N+s\left( \bar{\vartheta
}_{N}-\vartheta _{0}\right)  },t\right)^\tau\left( \bar{\vartheta
}_{N}-\vartheta _{0}\right){\rm d}s\\
&= -
\dot{\lambda }\left( \bar{\vartheta
}_{N},t\right)^\tau\left( \bar{\vartheta
}_{N}-\vartheta _{0}\right)+O\left(\left( \bar{\vartheta
}_{N}-\vartheta _{0}\right)^2\right) .
\end{align*}
Therefore we obtained the representation %
\begin{eqnarray*}
\sqrt{n}\left( \vartheta _{n}^{\star }-\vartheta _{0}\right) &=&\II\left(
\vartheta_{0}\right)^{-1}\frac{1}{\sqrt{n}}\sum_{j=N+1}^{n}\int_{\TT}^{ }
\dot{\ell}\left( \vartheta _{0},t\right) \left[ {\rm d}X_{j}\left( t\right)
  -\lambda \left( \vartheta _{0},t\right) {\rm d}t \right] +o\left( 1\right)
\end{eqnarray*}
 which proves the theorem.
\end{proof}

{\bf Remark 1}. If we suppose that the moments of the preliminary estimator
are bounded, say,
\begin{align*}
\Ex_{\vartheta_0} \left|\bar\vartheta _n-\vartheta _0\right|^p\leq C
\end{align*}
where $p\geq 2$ and $C>0$ does not depend on $n$, then the presented proof
allows to verify that the moments of the one-step MLE are bounded too and that
$\vartheta _n^\star$ is asymptotically efficient.

 In all examples below the MLEs have no explicit expression.

{\bf Example 1}.
 Suppose that the intensity function is
$$\lambda \left( \vartheta ,t\right) =\sum_{l=1}^{d}\vartheta_l h_l\left( t\right) +\lambda
 _{0}, t\in \TT$$ and $\vartheta _n^*$ is the MME defined in \eqref{mme1}.

The Fisher information matrix is
\begin{align*}
\II\left(\vartheta
\right)_{lk}=\int_{\TT}^{}\frac{h_l\left(t\right)h_k\left(t\right)}{
  h\left( t\right)^\tau\vartheta +\lambda  _{0} }\;{\rm d}t ,\qquad
l,k=1,\ldots,d
\end{align*}
 and the one-step MLE in this case is
\begin{align*}
\vartheta _{n}^{\star } &=\vartheta_{N}^*+\II\left(\vartheta_{N}^*
\right)^{-1}\frac{1}{n}\sum_{j=N+1}^{n}\int_{\TT}^{ }\frac{h\left( t\right)
}{h\left( t\right)^\tau\vartheta_{N}^* +\lambda _{0}} \left[ {\rm
    d}X_{j}\left( t\right) -h\left( t\right)^\tau{\vartheta }_{n}^* {\rm
    d}t-\lambda _{0}{\rm d}t\right].
\end{align*}%
Here $N=\left[n^\delta \right]$ and $\delta \in
\left(\frac{1}{2},1\right)$. By the Theorem \ref{T2} this estimator is
consistent and asymptotically normal.  Therefore we improved the preliminary
estimator ${\vartheta }_{N}^*$ up to asymptotically efficient $\vartheta
_{n}^{\star }$ .

{\bf Example 2}.
Suppose that the intensity function is
\begin{equation*}
\lambda \left( \vartheta ,t\right) =\frac{t^{\beta -1}\alpha ^{\beta }\exp
\left( -\alpha t\right) }{\Gamma \left( \beta \right) },\qquad t\geq 0,
\end{equation*}%
where the unknown parameter is $\vartheta =\left(\alpha ,\beta\right) $. Once
more we have a situation, where the explicit calculation of the MLE is
impossible.  The preliminary estimator can be the MME ${\vartheta
}_{n}^*=\left(\alpha _n^*,\beta _n^*\right)$ (see \eqref{mme2} and
\eqref{mme3}).

The vector $\dot l \left(\vartheta ,t\right)=\left(\frac{\beta }{\alpha }-t,
\ln\left(\alpha t\right) -\frac{\dot \Gamma \left(\beta \right)}{\Gamma
  \left(\beta \right)}  \right)$ and the Fisher information matrix $\II\left(\vartheta
\right)=\left(\II_{lk}\left(\vartheta \right) \right)_{2\times 2}$  is
\begin{eqnarray*}
\II_{11}\left(\vartheta \right)=\frac{\beta }{\alpha ^2},\qquad
\II_{12}\left(\vartheta \right)=-\frac{1}{\alpha },   \quad
\II_{22}\left(\vartheta \right)= \frac{\ddot \Gamma \left(\beta \right)\Gamma
  \left(\beta \right)-\dot \Gamma \left(\beta \right)^2 }{\Gamma \left(\beta
  \right)^2}.
\end{eqnarray*}

Hence the one-step MLE is
\begin{align*}
\vartheta _{n}^{\star } &=\vartheta_{N}^*+\II\left(\vartheta_{N}^*
\right)^{-1}\frac{1}{n}\sum_{j=N+1}^{n}\int_{\TT}^{ } \dot l \left({\vartheta
}_{N}^* ,t\right)\left[ {\rm d}X_{j}\left( t\right) -\lambda \left( {\vartheta
  }_{N}^*,t\right) {\rm d}t\right]
\end{align*}
and this estimators is asymptotically normal with the limit covariance matrix
$\II\left(\vartheta _0\right)^{-1}$.

\section{One-step MLE-process}

Suppose that we have the same model of observations of $n$ independent
inhomogeneous Poisson processes: $X^{n}=\left( X_{1},..., X_{n}\right) $ with
the intensity function $\lambda(\vartheta,t), t\in \TT $, where $ \vartheta$
is unknown parameter. Our goal is to construct an estimator
process $ \vartheta ^\star_n=\left( \vartheta
^\star_{k,n},k=1,\ldots,n\right)$, where the estimator $\vartheta^\star_{k,n}
$ satisfies the following conditions
\begin{enumerate}
\item {\it The estimator $\vartheta
^\star_{k,n} $  is based on the first $k$ observations
$X^{\left(k\right)}$.
\item  The calculation of this estimator has to be relatively
simple.

\item The   estimator $\vartheta^\star_{k,n} $ is asymptotically efficient. }
\end{enumerate}

Note that the MLE $\hat{\vartheta }_{k,n}$ defined by the  relations
\begin{equation}
\label{mleq}
V\left(\hat{\vartheta }_{k,n}, X^k\right)=\sup_{\vartheta \in \Theta}
V\left(\vartheta, X^k\right),\qquad  k=1,...,n
\end{equation}%
satisfies the conditions (1) and (3), but not (2).  is The likelihood ratio
function \cite{LS05} $V\left(\vartheta,
X^k\right),\vartheta \in\Theta  $ is
\begin{align*}
V\left(\vartheta, X^k\right)=\exp\left\{\sum_{j=1}^{k} \int_{\TT}^{} \ln
\lambda \left(\vartheta ,t\right){\rm
  d}X_j\left(t\right)-k\int_{\TT}^{}\left[\lambda \left(\vartheta ,t\right)-1
  \right]{\rm d}t\right\} .
\end{align*}
Remind that the solutions of the
equations \eqref{mleq} in the case of non linear intensity functions
$\lambda(\vartheta,\cdot )$ can be  computationally  difficult problems.
This is typical situation of "on-line" estimation.

The construction of such estimator-process is very close to the given above
construction of the
One-step MLE.  Introduce the same  learning observations $X^{N}=\left( X_{1},...,
X_{N}\right) $, where $N=[n^{\delta}]$, with $\delta \in \left(\frac{1}{2},
1\right)$ and suppose that we have a preliminary estimator $ \bar\vartheta _N$
such that $\sqrt{N}\left( \bar\vartheta _N-\vartheta _0\right)$ is bounded in
probability (condition ${\cal P}_0$).

The One-step MLE-process is
\begin{align*}
\vartheta _{k,n}^{\star }=\bar{\vartheta }_{N} + \II\left(\bar{\vartheta }_{N}
\right)^{-1}\frac{1}{k}\sum_{j=N+1}^{k}\int_{\TT}^{}\dot{\ell }%
\left( \bar{\vartheta }_{N} ,t\right)\left[ {\rm d}X_{j}\left( t\right) -\lambda
\left( \bar{\vartheta }_{N} ,t \right) {\rm d}t\right],
\end{align*}
where $k=N+1,...,n.$

\begin{theorem}
\label{T3}
Suppose that the conditions ${\cal P}_0$ and ${\cal L}_0$ are fulfilled.  Then the
One-step MLE-process $\vartheta_n^{\star}=\left( \vartheta _{k,n}^{\ast },
k=N+1,...,n\right)$ is consistent and asymptotically normal
\begin{equation*}
\sqrt{k}\left(\vartheta _{k,n}^{\star }-\vartheta_0 \right) \Longrightarrow
\mathcal{N}\left( 0,\II\left( \vartheta_0 \right) ^{-1}\right)
\end{equation*}
where we put $k=[sn] $. Here $s\in (0,1]$.
\end{theorem}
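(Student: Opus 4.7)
The plan is to follow, almost verbatim, the three-term decomposition used in the proof of Theorem \ref{T2}, with the sample size $k=[sn]$ playing the role previously played by $n$, while the learning size $N=[n^{\delta}]$ stays unchanged. First I would add and subtract $\lambda(\vartheta_{0},t)\,\mathrm{d}t$ inside the innovation and write
$$
\sqrt{k}\bigl(\vartheta_{k,n}^{\star}-\vartheta_{0}\bigr)=\sqrt{k}\bigl(\bar\vartheta_{N}-\vartheta_{0}\bigr)+M_{k,n}+R_{k,n},
$$
where
$$
M_{k,n}=\II(\bar\vartheta_{N})^{-1}\frac{1}{\sqrt{k}}\sum_{j=N+1}^{k}\int_{\TT}\dot\ell(\bar\vartheta_{N},t)\bigl[\mathrm{d}X_{j}(t)-\lambda(\vartheta_{0},t)\,\mathrm{d}t\bigr]
$$
is the centered stochastic term and
$$
R_{k,n}=\II(\bar\vartheta_{N})^{-1}\frac{k-N}{\sqrt{k}}\int_{\TT}\dot\ell(\bar\vartheta_{N},t)\bigl[\lambda(\vartheta_{0},t)-\lambda(\bar\vartheta_{N},t)\bigr]\mathrm{d}t
$$
collects the deterministic bias.

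Next, since $\bar\vartheta_{N}\to\vartheta_{0}$ in $\Pb_{\vartheta_{0}}$-probability by condition $\mathcal{P}_{0}$, the continuity in $\vartheta$ granted by $\mathcal{L}_{0}$ lets me substitute $\vartheta_{0}$ for $\bar\vartheta_{N}$ in the coefficients of $M_{k,n}$ at cost $o_{\Pb}(1)$, exactly as in the proof of Theorem \ref{T2}. The resulting random variable is an $\II(\vartheta_{0})^{-1}$-scaled sum of $k-N$ independent and identically distributed centered stochastic integrals with common covariance $\II(\vartheta_{0})$; the Central Limit Theorem then gives $M_{k,n}\Longrightarrow\mathcal{N}(0,\II(\vartheta_{0})^{-1})$, the normalisation factor $\frac{k-N}{k}=1-N/k$ tending to $1$ because $N/k\asymp n^{\delta-1}\to 0$.

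The remainder $\sqrt{k}(\bar\vartheta_{N}-\vartheta_{0})+R_{k,n}$ is handled by the Taylor expansion
$$
\lambda(\vartheta_{0},t)-\lambda(\bar\vartheta_{N},t)=-\dot\lambda(\bar\vartheta_{N},t)^{\tau}(\bar\vartheta_{N}-\vartheta_{0})+O\bigl(|\bar\vartheta_{N}-\vartheta_{0}|^{2}\bigr)
$$
combined with the Fisher information identity $\II(\bar\vartheta_{N})=\int_{\TT}\dot\lambda(\bar\vartheta_{N},t)^{\tau}\dot\ell(\bar\vartheta_{N},t)\,\mathrm{d}t$. The leading contribution of $R_{k,n}$ is therefore $-\frac{k-N}{k}\sqrt{k}(\bar\vartheta_{N}-\vartheta_{0})$, which cancels $\sqrt{k}(\bar\vartheta_{N}-\vartheta_{0})$ up to the factor $N/k$, leaving
$$
\sqrt{k}\bigl(\bar\vartheta_{N}-\vartheta_{0}\bigr)\,O(N/k)+O\bigl(\sqrt{k}\,|\bar\vartheta_{N}-\vartheta_{0}|^{2}\bigr)=O_{\Pb}(\sqrt{N/k})+O_{\Pb}(\sqrt{k}/N).
$$

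The main obstacle is keeping both error terms small, and this is precisely what dictates the range $\delta\in(1/2,1)$: with $N\asymp n^{\delta}$ and $k\asymp n$, $\sqrt{N/k}\asymp n^{(\delta-1)/2}\to 0$ uses $\delta<1$, while $\sqrt{k}/N\asymp n^{1/2-\delta}\to 0$ uses $\delta>1/2$. Both already appeared in Theorem \ref{T2}; the only new input here is that $k=[sn]$ with fixed $s\in(0,1]$ still satisfies $k/n\to s>0$, so every scaling argument transcribes unchanged. Consistency of $\vartheta_{k,n}^{\star}$ is then immediate from the asymptotic normality together with the $\sqrt{k}$-normalisation.
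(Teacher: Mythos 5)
Your proposal is correct and follows essentially the same route as the paper: the paper itself dispenses with a separate proof of Theorem \ref{T3} by noting it is a slight modification of the proof of Theorem \ref{T2}, and your argument is precisely that modification, with $k=[sn]\asymp n$ replacing $n$ while $N=[n^{\delta}]$ is unchanged, so that the two remainder bounds $O_{\Pb}(\sqrt{N/k})$ and $O_{\Pb}(\sqrt{k}/N)$ still vanish for $\delta\in(1/2,1)$. The decomposition, the Taylor/Fisher-information cancellation of the bias term, and the CLT step all match the paper's Theorem \ref{T2} argument.
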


{\bf Proof}. There is no need to present a new proof because it is a slight
  modification of the given above proof of the Theorem \ref{T2}.

\section{Two-step MLE-process}

The One step MLE-process presented in the preceding section allows us to
calculate the values  $\vartheta _{k,n}^{\star }$ for $k=N+1,...,n$, where $N=%
\left[n^{\delta}\right]$ with $\delta \in ( \frac{1}{2},1] $. Therefore
we have no estimators for $k=1,...,N$.

It is interesting to reduce the learning interval and to start the estimation
process earlier. Let us see how it can be done with the learning interval
$X^N=\left(X_1,\ldots,X_N\right)$ with  $N=\left[n^{\delta}%
\right]$ and $\delta \in \Bigl(  \frac{1}{3}, \frac{1}{2}\Bigr] $.

We suppose that a preliminary estimator $\bar{\vartheta }_{N}$ is given. Then we define the second
preliminary estimator
\begin{equation*}
\bar{\vartheta }_{k,n}=\bar{\vartheta }_{N}+\II\left(\bar{
\vartheta }_{N} \right)^{-1}\frac{1}{k}\sum_{j=N+1}^{k}\int_{\TT}^{}\dot{\ell }%
\left( \bar{\vartheta }_{N} ,t\right)\left[ {\rm d}X_{j}\left( t\right) -\lambda
\left( \bar{\vartheta }_{N} ,t \right) {\rm d}t\right],
\end{equation*}
and the Two-step MLE-process is defined by the relation
\begin{equation*}
\vartheta _{k,n}^{\star \star}=\bar{\vartheta }_{k,n}+\II\left(\bar{
\vartheta }_{N} \right)^{-1}\frac{1}{k}\sum_{j=N+1}^{k}\int_{\TT}^{}\dot%
{\ell }\left( \bar{\vartheta }_{N} ,t\right)\left[ {\rm d}X_{j}\left( t\right)
-\lambda \left( \bar{\vartheta }_{k,n} ,t \right) {\rm d}t\right],
\end{equation*}
where $k=N+1,...,n.$
Let us show that it is asymptotically normal
\begin{equation*}
\sqrt{k}\left(\vartheta _{k,n}^{{\star \star}}-\vartheta_0 \right) \Longrightarrow
\mathcal{N}\left( 0,\II\left( \vartheta_0 \right) ^{-1}\right).
\end{equation*}
Here $k=\left[sn\right]$ and $
s\in ( 0,1 ] $.
 We have
\begin{align*}
\sqrt{k}\left(\vartheta _{k,n}^{\star \star}-\vartheta_0 \right)&= \sqrt{k}\left(%
\bar{\vartheta }_{k,n}-\vartheta_0 \right) +\\
&\quad +\II\left(\bar{
\vartheta }_{N} \right)^{-1} \frac{1}{k} \sum_{j=N+1}^{k}\int_{\TT}^{}\dot{\ell }%
\left( \bar{\vartheta }_{N} ,t\right)  \left[ {\rm d}X_{j}\left( t\right) -\lambda
\left( \vartheta_{0} ,t\right) {\rm d}t\right] \\
&\quad+ \II\left(\bar{
\vartheta }_{N} \right)^{-1} \frac{\left(k-N\right)}{k}%
\int_{\TT}^{}\dot{\ell }\left( \bar{\vartheta }_{N} ,t\right) %
\left[\lambda \left(\vartheta_0 ,t \right) -\lambda \left( \bar{\vartheta }%
_{k,n} ,t \right) \right]{\rm d}t.
\end{align*}
We can write for some $\gamma>0$, which we chose later
\begin{align*}
&n^{\gamma}\left(\bar{\vartheta }_{k,n}-\vartheta_0 \right)
= n^{\gamma}\left(
\bar{\vartheta }_{N}-\vartheta_0 \right) \\
&\qquad +\II\left(\bar{
\vartheta }_{N} \right)^{-1} \frac{n^{\gamma}}{k} \sum_{j=N+1}^{k}\int_{\TT}^{}\dot{\ell }
\left( \bar{\vartheta }_{N} ,t\right)\left[ {\rm d}X_{j}\left( t\right) -\lambda \left( \vartheta_{0} ,t
\right) {\rm d}t\right]\\
&\qquad +\II\left(\bar{
\vartheta }_{N} \right)^{-1}
\frac{n^{\gamma}\left(k-N\right)}{k}\int_{\TT}^{}\dot{\ell }\left( \bar{\vartheta }
_{N} ,t\right)\left[\lambda \left(\vartheta_0 ,t \right) -\lambda \left(
\bar{\vartheta }_{N} ,t \right) \right]{\rm d}t \\
&=
{n^{\gamma} \left(\bar{\vartheta }_{N}-\vartheta_0 \right)}{}\left[J -
\left(1-\frac{N}{k}\right)\II\left(\bar{\vartheta }_{N}\right)^{-1}\int_{\TT}^{}\dot{\ell }\left(
\bar{\vartheta }_{N} ,t\right)\lambda ( \tilde{\vartheta } ,t ) {\rm d}t%
\right] \\
&\qquad +\II\left(\bar{\vartheta }_{N}\right)^{-1}\frac{n^{\gamma}}{k}%
\sum_{j=N+1}^{k}\int_{\TT}^{}\dot{\ell }\left( \bar{\vartheta
}_{N} ,t\right) \left[ {\rm d}X_{j}\left( t\right) -\lambda \left( \vartheta_{0} ,t
\right) {\rm d}t\right] \\
&=O\left({n^{\gamma} \left|\bar{\vartheta }_{N}-\vartheta_0
  \right|^2}\right)+O\left(\frac{N}{k}\right)  \\
&\quad+\II\left(\bar{\vartheta }_{N}\right)^{-1}\frac{n^{\gamma}}{k}%
\sum_{j=N+1}^{k}\int_{\TT}^{}\dot{\ell }\left( \bar{\vartheta
}_{N} ,t\right) \left[ {\rm d}X_{j}\left( t\right) -\lambda \left( \vartheta_{0} ,t
\right) {\rm d}t\right].
\end{align*}%

If we take $\gamma < \delta $ then we have
\begin{equation*}
n^{\gamma}n^{-\delta}\left(n^{\frac{\delta}{2}}\left|\bar{\vartheta }_{N}-\vartheta_0
\right)\right|^2 \longrightarrow 0.
\end{equation*}
Further, as $\gamma < \delta \leq \frac{1}{2}$ we have
\begin{eqnarray*}
&&\frac{n^{\gamma}}{k}\sum_{j=N+1}^{k}\int_{\TT}^{}\dot{\ell }%
\left( \bar{\vartheta }_{N} ,t\right) \left[ {\rm d}X_{j}\left( t\right)
-\lambda \left( \vartheta_{0} ,t \right) {\rm d}t\right] \\
&&\qquad =\frac{n^{{\gamma}-\frac{1}{2}}}{\sqrt{sk}}\sum_{j=N+1}^{k}\int_{\TT}^{}\dot{\ell }
\left( \bar{\vartheta }_{N} ,t\right) \left[ {\rm d}X_{j}\left(
t\right) -\lambda \left( \vartheta_{0} ,t \right) {\rm d}t\right]  =o\left(n^{{\gamma}-\frac{1}{2}}\right) \rightarrow 0.
\end{eqnarray*}
Hence for $\gamma < \delta$
\begin{equation*}
n^{\gamma}\left(\bar{\vartheta }_{k,n}-\vartheta_0 \right)\longrightarrow 0.
\end{equation*}
Therefore
\begin{align*}
\sqrt{k}\left(\vartheta _{k,n}^{\star \star}-\vartheta_0 \right)&=O\left(\sqrt{k}\left|
\bar{\vartheta }_{k,n}-\vartheta_0 \right|\,\left|\bar{\vartheta }
_{N}-\vartheta_0 \right|\right)\\
&\quad + {\II\left(\bar{\vartheta }_{k,n} \right)^{-1}}\frac{1}{\sqrt{k}}
\sum_{j=N+1}^{k}\int_{\TT}^{}\dot{\ell }\left( \bar{\vartheta
}_{N} ,t\right)
\left[ {\rm d}X_{j}\left( t\right) -\lambda \left( \vartheta_{0} ,t
\right) {\rm d}t\right].
\end{align*}
We see that if we take $\frac{1}{2}-\gamma-\frac{\delta}{2} <0$ then
\begin{eqnarray*}
\sqrt{k}\left|\bar{\vartheta }_{k,n}-\vartheta_0 \right|\left|\bar{
\vartheta }_{N}-\vartheta_0 \right|&=& n^{\frac{1}{2}}n^{-\gamma}n^{-\frac{\delta}{2}
}\left(n^{\gamma}\left|\bar{\vartheta }_{k,n}-\vartheta_0 \right|
\right) n^{\frac{\delta}{2} }\left|\bar{\vartheta }_{N}-\vartheta_0
\right|\rightarrow 0
\end{eqnarray*}%
Therefore if $\delta \in \left( \frac{1}{3}, \frac{1}{2}\right) $, then we can take such $
\gamma$, that $\gamma <\delta$ and $\gamma>\frac{1-\delta}{2}$.
Finally we obtain
\begin{align*}
\sqrt{k}\left(\vartheta _{k,n}^{\star \star}-\vartheta_0 \right)&=
\frac{\II\left(\vartheta_{0} \right)^{-1}}{\sqrt{k}} \sum_{j=N+1}^{k}\int_{\TT}^{}
\dot{\ell }\left( \vartheta_{0} ,t\right)
\left[ {\rm d}X_{j}\left( t\right) -\lambda \left( \vartheta_{0} ,t
\right) {\rm d}t\right]+o\left(1\right) \\
&\qquad \qquad\qquad\qquad \Longrightarrow  \mathcal{N}\left( 0,\II\left(\vartheta_{0} \right)^{-1}\right).
\end{align*}%
Therefore we proved the following theorem
\begin{theorem}
\label{T4}
Let the conditions ${\cal P}_0$ and ${\cal L}_0$ be fulfilled. Then the
Two-step MLE-process $\left(\vartheta
_{k,n}^{\star \star} ,k=N+1,...,n\right) $ is asymptotically normal
\begin{equation*}
\sqrt{k}\left(\vartheta _{k,n}^{\ast \ast}-\vartheta_0 \right) \Longrightarrow
\mathcal{N}\left( 0,\II\left( \vartheta_0 \right) ^{-1}\right).
\end{equation*}
Here $k=\left[sn\right]$.
\end{theorem}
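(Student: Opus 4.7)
The plan is to mirror the proof of Theorem \ref{T2} but with one additional layer of bootstrapping. Starting from the definition of $\vartheta _{k,n}^{\star \star}$, I would write the three-term decomposition
\begin{align*}
\sqrt{k}\left(\vartheta _{k,n}^{\star \star}-\vartheta _0\right) &= \sqrt{k}\left(\bar\vartheta _{k,n}-\vartheta _0\right) \\
&\quad + \II(\bar\vartheta _N)^{-1}\frac{1}{\sqrt{k}}\sum_{j=N+1}^{k}\int_\TT \dot\ell(\bar\vartheta _N,t)\left[{\rm d}X_j(t)-\lambda(\vartheta _0,t){\rm d}t\right]\\
&\quad + \II(\bar\vartheta _N)^{-1}\frac{k-N}{\sqrt{k}}\int_\TT \dot\ell(\bar\vartheta _N,t)\left[\lambda(\vartheta _0,t)-\lambda(\bar\vartheta _{k,n},t)\right]{\rm d}t,
\end{align*}
and then Taylor-expand $\lambda(\vartheta _0,t)-\lambda(\bar\vartheta _{k,n},t)$ around $\bar\vartheta _{k,n}$. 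The linear term produces $-\dot\lambda(\bar\vartheta _{k,n},t)^\tau(\bar\vartheta _{k,n}-\vartheta _0)$; using $\II(\vartheta )=\int_\TT \dot\lambda(\vartheta ,t)^\tau\dot\ell(\vartheta ,t){\rm d}t$ together with consistency of $\bar\vartheta _N$, this linear term cancels the leading $\sqrt{k}(\bar\vartheta _{k,n}-\vartheta _0)$ modulo $O(\sqrt{k}|\bar\vartheta _{k,n}-\vartheta _0||\bar\vartheta _N-\vartheta _0|)$ and a factor $N/k$.

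The main step is to establish a super-$\sqrt{N}$ rate for the intermediate estimator, namely $n^\gamma(\bar\vartheta _{k,n}-\vartheta _0)\to 0$ in probability for a suitable $\gamma$. For this I would apply exactly the same expansion as in Theorem \ref{T2} to the definition of $\bar\vartheta _{k,n}$, obtaining
\begin{equation*}
n^\gamma(\bar\vartheta _{k,n}-\vartheta _0) = O\!\left(n^\gamma|\bar\vartheta _N-\vartheta _0|^2\right) + O\!\left(\tfrac{N}{k}\right) + \II(\bar\vartheta _N)^{-1}\frac{n^\gamma}{k}\sum_{j=N+1}^{k}\int_\TT \dot\ell(\bar\vartheta _N,t)\left[{\rm d}X_j(t)-\lambda(\vartheta _0,t){\rm d}t\right].
\end{equation*}
Using $|\bar\vartheta _N-\vartheta _0|=O_\pb(n^{-\delta/2})$ the first term vanishes provided $\gamma<\delta$; the stochastic term is $o(n^{\gamma-1/2})\to 0$ since $\gamma<\delta\leq 1/2$; and $N/k=O(n^{\delta-1})\to 0$.

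Plugging this back, the cross term is $\sqrt{k}|\bar\vartheta _{k,n}-\vartheta _0||\bar\vartheta _N-\vartheta _0|=O_\pb(n^{1/2-\gamma-\delta/2})$, which is $o(1)$ exactly when $\gamma>(1-\delta)/2$. The delicate point, and the main obstacle, is finding a single $\gamma$ satisfying both $\gamma<\delta$ and $\gamma>(1-\delta)/2$ simultaneously; solving these inequalities gives $\delta>1/3$, which is precisely the standing hypothesis on $\delta$. Once such $\gamma$ is fixed, the remaining stochastic term is handled exactly as in Theorem \ref{T2}: by consistency of $\bar\vartheta _N$ one replaces $\II(\bar\vartheta _N)^{-1}\dot\ell(\bar\vartheta _N,t)$ by $\II(\vartheta _0)^{-1}\dot\ell(\vartheta _0,t)$ up to $o(1)$, and the classical CLT for integrals of deterministic functions against centered Poisson measures yields the limit $\mathcal N(0,\II(\vartheta _0)^{-1})$, completing the proof.
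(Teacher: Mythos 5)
Your proposal is correct and follows essentially the same route as the paper: the same three-term decomposition, the same intermediate rate result $n^{\gamma}\bigl(\bar\vartheta_{k,n}-\vartheta_0\bigr)\to 0$ obtained by repeating the Theorem \ref{T2} expansion, and the same reconciliation of the two constraints $\gamma<\delta$ and $\gamma>(1-\delta)/2$, which is exactly where the hypothesis $\delta>1/3$ enters. Nothing essential is missing.
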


{\bf Example 4}. Suppose that the intensity function of the observed
inhomogeneous Poisson process is
\begin{align*}
\lambda \left(\vartheta ,t\right)=A\sin\left(2\pi t+\vartheta \right)-\lambda
_0,\qquad 0\leq t\leq 1
\end{align*}
where $\vartheta \in \Theta =\left(\alpha ,\beta \right)$, $0<\alpha <\beta
<2\pi $ and $A<\lambda _0$. Let us take $g\left(t\right)=\cos\left(2\pi t\right)$  and note that
\begin{align*}
M\left(\vartheta \right)=\int_{0}^{1}g\left(t\right) \lambda
\left(\vartheta,t \right){\rm d}t =\frac{A}{2}\cos\left(\vartheta
\right),\qquad \vartheta =\arccos \left(\frac{2M\left(\vartheta \right)}{A}\right).
\end{align*}
The MME is
\begin{align*}
\vartheta_n^*=\arccos\left(\frac{2}{An}\sum_{j=1}^{n}\int_{0}^{1}\cos\left(2\pi
t\right){\rm d}X_j\left(t\right)\right) .
\end{align*}
The Fisher information
$$
\II=\int_{0}^{1}\frac{A^2\cos^2\left(2\pi t\right)}{A\sin\left(2\pi t\right)+\lambda _0}{\rm d}t
$$
does not depend on $\vartheta $.
Let us take $N=\left[n^{\frac{4}{9}}\right]$ and introduce the Two-state
MLE-process as follows
\begin{align*}
\bar\vartheta _{k,n}&=\vartheta_N^*+\frac{1}{\II k
}\sum_{j=N+1}^{k}\int_{0}^{1}\frac{A\cos\left(2\pi t+
  \vartheta_N^*\right)}{A\sin\left(2\pi t+\vartheta_N^*\right)+\lambda
  _0}\; {\rm d}X_j\left(t\right),\quad k=N+1,\ldots,n,\\
\vartheta _{k,n}^{\ast \ast}&=\bar\vartheta _{k,n}+\frac{1}{\II k
}\sum_{j=N+1}^{k}\int_{0}^{1}\frac{A\cos\left(2\pi t+
  \vartheta_N^*\right)}{A\sin\left(2\pi t+\vartheta_N^*\right)+\lambda
  _0}\; {\rm d}X_j\left(t\right)\\
&\qquad -\frac{k-N}{\II k}\int_{0}^{1}\frac{\left[A\cos\left(2\pi t+
  \vartheta_N^*\right)\right]\left[ A\sin\left(2\pi t+\bar\vartheta _{k,n}\right)+\lambda
  _0 \right]}{A\sin\left(2\pi t+\vartheta_N^*\right)+\lambda
  _0}\; {\rm d}t
\end{align*}
because
\begin{align*}
\int_{0}^{1}A\cos\left(2\pi t+  \vartheta_N^*\right){\rm d}t=0 .
\end{align*}
By the Theorem \ref{T4}
\begin{align*}
\sqrt{k}\left(\vartheta _{k,n}^{\ast \ast}-\vartheta _0\right)\Longrightarrow
     {\cal N}\left(0, \II^{-1}\right).
\end{align*}

\section{Discussions}

It is clear that we can continue the process and to reduce the time of
learning using Three and more-step MLE (see \cite{Kut16}, where the
construction of the Thre-step MLE-process is discussed).

The space $\TT$ can
be of more general nature. For example, it can be $\RR^m$.

The similar construction  of one, two  and three-step MLE-processes in the case of
nonlinear time-series were realized in
the work \cite{KM16}. The numerical simulation of the two-step MLE-process
presented there show the good convergence of the estimation process to the
true value.

Note that the multi-step MLE-processes were used in the problems of
approximation of the solution of the Backward Stochastic Differential Equation
(see, e.g. \cite{Kut14}, \cite{K16}).

All these allow to think that the proposed construction of the  multi-step
MLE-processes is in some sense universal and can be used for the other models
of observations too.

\bigskip

{\bf Acknowledgement.}
This work was done under partial financial support of the grant of
RSF number 14-49-00079.


\begin{thebibliography}{99}

\bibitem{ALZ02}   Albeverio, S., Laob, L.-J. and  Zhaoc, X.-L.    (2002) {\it
  Continuous Time Financial Market with a   Poisson Process. } Springer, N.Y.

\bibitem{BD69} Bar-David, I. (1969)  Communication under Poisson regime. {\it IEEE
Trans. Information Theory,}  IT-15, 1, 31--37.

\bibitem{BG03} Bl{\ae}sild, P. and Granfeldt, J. (2003) {\it Statistics with
  Applications in Biology and Geology.} Chapman \& Hall, London.

\bibitem{Bor98} Borovkov, A. A. (1998) {\it Mathematical Statistics}. Gordon \&
  Breach, Amsterdam.

\bibitem{D77} Davies, R. (1977) Testing the hypothesis that a point
process is Poisson. \textit{Adv. in Appl. Probab.} \textbf{9}, 724-746.


\bibitem {IH81} Ibragimov I.A. and Khasminskii R. (1981) {\it Statistical
Estimation - Asymptotic Theory.} {Springer-Verlag}, New York.


\bibitem{KU15} Kamatani, K. and Uchida, M. (2015) {Hybrid multi-step
  estimators for stochastic differential equations based on sampled data.}
  {\it Statist. Inference Stoch. Processes.} 18, 2, 177-204.

\bibitem {Kut84} Kutoyants, Yu.A. (1984) {\it Parameter Estimation for
  Stochastic Processes.} Heldermann, Berlin.

\bibitem{Kut98} Kutoyants Yu.A. (1998) {\it Statistical Inference for Spatial
Poisson Processes}. {Springer-Verlag}, N. Y.

\bibitem {Kut14} Kutoyants, Yu.A. (2014) { On approximation of the backward
  stochastic differential equation. Small noise, large samples and high
  frequency cases.}  {\it Proceedings of the Steklov Institute of
    Mathematics.}, 287, 133-154.

\bibitem{Kut15} Kutoyants, Yu.A. (2017) On multi-step MLE-process for
ergodic diffusion,  {\it Stochastic Processes and their Applications}. 127, 2243-2261.

\bibitem{Kut16} Kutoyants Yu.A. (2018) {\it Introduction to Statistics of
Poisson Processes}. To appear.

\bibitem{K16} Kutoyants, Yu.A. (2016) On approximation of BSDE and Multi-step
  MLE-processes. {\it Probability, Uncertaity and Quantitative Risk. } 1, 1,
  23-41.

\bibitem{KM16} Kutoyants, Yu.A. and Motrunich, A. (2016) On multi-step
MLE-process for Markov sequences. {\it Metrika}, 79, 6, 705-724.

\bibitem{LLC56} Le Cam, L. (1956) On the asymptotic theory of estimation
and testing hypotheses. \textsl{Proc. 3rd Berkeley Symposium I}, 355-368.


\bibitem{L99} Lehmann, E.L.  (1999) {\it Elements of Large-Sample Theory.} {Springer}, N.Y.

\bibitem{LS05} Liptser, R. and Shiryayev, A. N. (2005) {\it Statistics of Random
Processes}. {2nd ed}, v.1, v.{2}, Springer, N.Y.

\bibitem{RB00} Rigdon, S.E. and Basu, A.P. (2000) {\it Statistical Methods for
the Reliability of Repairable Systems}, New York: John Wiley \& Sons, Inc.

\bibitem {R88} Robinson, P.M. (1988) The stochastic difference between
  econometric statistics. {\it Econometrica. } 56, 3, 531-548.

\bibitem {S16} Sarkar, S.K. (2016) {\it Single Molecule Biophysics and Poisson Process Approach to
  Statistical Mechanics.} Morgan \& Claypool,  San Rafael CA.

\bibitem{SM91} Snyder, D.R.  and  Miller, M.I. (1991) {\it Random Point Processes in Time
and Space.} Springer, N. Y.

\bibitem{UY12} Uchida, M. and Yoshida, N. (2012) Adaptive estimation of
  ergodic diffusion process based on sampled
  data. \textsl{Stoch. Proces. Appl.}, \textbf{122}, 2885-2924.

\bibitem{UO95} Utsu, T., Ogata, Y., and  Matsu'ura, R. (1995) {The centenary of the
Omori formula for a decay law of aftershock activity}. {\it J. Phys. Earth},
43, 1--33.

\end{thebibliography}
\end{document}